\newtheorem{Th}{Theorem}
\newcommand{\dd}{{\Delta t}}
\newcommand{\pc}{\bar{\partial}_t}
\newcommand{\ps}{\bar{\partial}_{tt}}
\newcommand{\half}{\frac{1}{2}}
\newcommand{\unp}{{\pmb U}^{n+\frac{1}{2}}}
\newcommand{\unm}{{\pmb U}^{n-\frac{1}{2}}}
\newcommand{\pnp}{P^{n+\frac{1}{2}}}
\newcommand{\pnm}{P^{n-\frac{1}{2}}}
\newcommand{\xnp}{\xi^{n+\frac{1}{2}}}
\newcommand{\xnm}{\xi^{n-\frac{1}{2}}}
\newcommand{\cnp}{{\pmb\chi}^{n+\frac{1}{2}}}
\newcommand{\znp}{\zeta^{n+\frac{1}{2}}}
\newcommand{\fnp}{\phi^{n+\frac{1}{2}}}
\newcommand{\rx}{\xi}
\newcommand{\rc}{{\pmb \chi}}
\newcommand{\re}{{\pmb \eta}}
\newcommand{\ru}{{\pmb u}}
\newcommand{\rv}{{\pmb v}}
\newcommand{\rf}{{\pmb f}}
\newcommand{\rz}{{\pmb z}}
\newcommand{\rU}{{\pmb U}}
\newcommand{\rR}{{\pmb R}}
\newcommand{\rH}{{\pmb H}}
\newcommand{\rV}{{\pmb V}}
\newcommand{\la}{\lambda}
\newcommand{\pn}{P^{n;\theta}}
\newcommand{\nt}{{n;\theta}}
\newcommand{\nd}{\nabla\cdot}
\title{A Priori Error Estimates for Mixed Finite Element
$\theta$-Schemes for the Wave Equation\thanks{This research was supported by
 Sultan Qaboos University under Grant
IG/SCI/DOMS/13/02.}}
\author{Samir Karaa\thanks{Department of Mathematics and Statistics, Sultan
Qaboos University, P. O. Box 36, Al-Khod 123, Muscat,
Sultanate of Oman ({\tt skaraa@squ.edu.om}).}}
\date{}
\begin{document}

\maketitle

\begin{abstract}

A family of implicit-in-time mixed finite element schemes is presented
for the  numerical approximation of the acoustic wave equation.
The mixed space discretization is based on the displacement form of the 
wave equation  and the time-stepping method employs a three-level 
one-parameter scheme.
A rigorous stability analysis is presented based on  
energy estimation and sharp stability results are obtained. 
A convergence analysis is carried out and
optimal a priori $L^\infty(L^2)$ error estimates for both displacement and 
pressure are derived.

\end{abstract}

{\bf Key words} - {\small wave equation, 
mixed finite elements,  error estimation, energy technique}

{\bf Mathematical subject codes} 65L05, 65M12, 65M60, 65M15

\pagestyle{myheadings}
\thispagestyle{plain}

\section{Introduction}

The acoustic wave equation is used to model the effects of wave propagation 
in heterogeneous media. 
Solving this equation efficiently  is of fundamental importance in 
many real-life problems.  In geophysics,
it helps for instance in the interpretation of the seismograph field data 
and to predict damage patterns due to earthquakes. 
Using finite element methods for its approximation 
is attractive because of the ability to handle complex discretizations and design adaptive grid refinement 
strategies  based on error indicators. 

Previous attempts on wave simulation by finite elements have used
continuous Galerkin methods \cite{Baker-3,Bao,CJRT,Dupont-15,Marfurt,Rauch},
mixed finite element methods \cite{CDW-12,CDW-13,Geveci-19,JRW,Pani-2001,Pani-2004,VT-2008}, and
discontinuous Galerkin  methods \cite{GS-2009,Johnson,RW-27,RW-28}.
In a mixed finite element formulation both displacements and stresses are
approximated simultaneously. 
This approach provides higher-order approximations to the
stresses. 
This property is important in many problems, in particular 
in modeling boundary controlability of the wave equation \cite{GKW-89}. 
One of the main difficulties of the mixed finite element techniques is the 
requirement of compatibility of the approximating spaces for convergence 
and stability.

Given a bounded convex polygonal domain $\Omega$ in $\mathbb{R}^m$, $m=2,\,3$, with boundary
$\partial\Omega=\Gamma_D\cup\Gamma_N$, and unit outward normal  
${\pmb \nu}$, the general form of the wave equation is
\begin{eqnarray}
\rho\ru_{tt} +\nd {\pmb{\tilde \tau}}&=&\rf \qquad \mbox{ in } \Omega\times
(0,T),\label{eq:o1}\\
\nd\ru &=&0 \qquad \mbox{ on } \Gamma_D\times (0,T),\label{eq:o2}\\
\ru\cdot{\pmb \nu} &=&0 \qquad \mbox{ on } \Gamma_N\times (0,T),\label{eq:o3}\\
\ru(\cdot,0)&=&\ru^0 \qquad \mbox{ in } \Omega,\label{eq:o4}\\
\ru_t(\cdot,0)&=&\rv^0 \qquad \mbox{ in } \Omega,\label{eq:o5}
\end{eqnarray}
where $\ru$ is the displacement, $\rho$ is the density, and ${\pmb{\tilde\tau}}$ 
is the stress tensor given by the generalized Hooke's law 
${\pmb{\tilde\tau}}=\lambda(\nd\ru){\pmb{\tilde I}}+
\mu(\nabla\ru+(\nabla\ru)^T)$. Here $\lambda>0$ and $\mu$  are the Lam\'e 
coefficients characterizing the material. The function $\rf$ represents a 
general source term and $\ru^0$ and $\rv^0$ are initial conditions on 
displacements and velocities. We assume that $\rf$, $\ru^0$ and $\rv^0$
are smooth enough so that there is a unique solution   
$\ru\in{\cal C}^2((0,T)\times \Omega)$ to (\ref{eq:o1})-(\ref{eq:o5}), see
\cite{Knops-Payne-1971}.
 
The limiting case of (\ref{eq:o1}) with $\mu=0$ is referred to as the acoustic 
wave equation, which is
\begin{equation}
\label{eq:wave1}
\rho\ru_{tt} +\nd(\lambda(\nd\ru){\pmb{\tilde I}})=\rf.
\end{equation}
It is assumed that $\rho$ and $\lambda$ are bounded below and  above by the positive 
constants $\rho_0$, $\rho_1$, $\lambda_0$, and $\lambda_1$, respectively. 
This vector equation is equivalent to the scalar wave equation after making 
the substitution $p=\lambda\nd·u$. 
The mixed method is established by using this relationship, leading to the 
coupled system
\begin{eqnarray}
\rho\ru_{tt} -\nabla p&=&\rf \qquad \qquad \mbox{ in } \Omega\times
(0,T),\label{eq:oo1}\\
\lambda^{-1}p&=&\nd\ru \qquad \mbox{ in } \Omega\times(0,T),\label{eq:oo2}
\end{eqnarray}
with the appropriate boundary and initial conditions.

A priori error estimates for solving (\ref{eq:oo1})-(\ref{eq:oo2}) were obtained
in \cite{CDW-12,CDW-13,Geveci-19,JRW}. In \cite{Geveci-19}, Geveci
derived $L^\infty$-in-time, $L^2$-in-space error bounds for the
continuous-in-time mixed finite element approximations of velocity and stress.
In \cite{CDW-12,CDW-13}, a priori error estimates were obtained for the mixed
finite element approximation of displacement which  requires less regularity 
than was needed in \cite{Geveci-19}. Stability for a family of discrete-in-time
schemes was also demontratred. In \cite{JRW}, an
alternative mixed finite element displacement formulation was proposed 
reducing requirement on the regularity on the displacement variable. 
For the explicit discrete-in-time problem,
stability results were established and error estimates were obtained.
The effectiveness of the method analyzed in \cite{JRW} was demonstrated in 
\cite{Jenkins}  by  providing simulations using both lowest-order and 
next-to-lowest-order Raviart–-Thomas elements on rectangles \cite{R16}. 


The purpose of this paper is to analyze an implicit time-stepping method 
combined with the mixed finite
element discretization proposed in \cite{JRW}. We prove the stability of the
proposed method by using energy estimation, and show in particular that it conserves 
certain energy. 
We also invertigate the convergence of the method
and prove optimal a priori $L^\infty(L^2)$ error estimates 
for both displacement and pressure. The rest of the paper is organized as
follows. In sections 2 and 3, we introduce notations and
describe the weak formulation of the problem. The fully discrete 
mixed finite element method is presented in section 4. 
Stability results are established in section 4 and optimal a priori error 
estimates are obtained in section 5. 
Conclusions are given in the last section.
 


\section{Notation}
\label{sect:Notation}
\setcounter{equation}{0}
We shall use the following inner products and norms in this paper. The $L^2$-inner
product over $\Omega$ is defined by
$$
(u,v)=\int_\Omega uv\,d\Omega,
$$
inducing the $L^2$-norm over $\Omega$, $||v||_{L^2(\Omega)}=(v,v)^{1/2}$. 
The inner product over the boundary $\partial \Omega$ is denoted by
$$
\langle u,v\rangle=\int_{\partial \Omega} uv\,d\Omega
$$
for $u$, $v\in H^{\half+\varepsilon}(\Omega)$ with $\varepsilon>0$.
We introduce the time-space norm:
$$
||v||_{L^2(0,T;L^2(\Omega))}=||v||_{L^2(L^2)}=\left(\int_0^T||v||^2_{L^2(\Omega)}dt
\right)^{\half}.
$$
The time-space norm  $||\cdot||_{L^\infty(L^2)}$ is similarly defined.
In addition to the $L^2$ spaces, we use the standard Sobolev space for mixed
methods:
$$
\rH(\Omega,\mbox{div})=\{\rv:\rv\in(L^2(\Omega))^m,\nd \rv\in L^2(\Omega)\},
$$
with associated norm
$$
||\rv||_{\rH(\Omega,\mbox{div})}=  ||\rv||_{L^2(\Omega)}+||\nd \rv||_{L^2(\Omega)},
$$
where
$$
||\rv||_{L^2(\Omega)}=\left(\sum_{i=1}^m||v_i||_{L^2(\Omega)}^2\right)^\half.
$$

For the time discretization, we adopt the following notation. Let
$N$ be a positive integer, $\dd=T/N$, and $t^n=n\dd$. For any function
$v$ of time, let $v^n$ denote $v(t^n)$. We shall use this notation for functions
defined for all times as well as those defined only at discrete times. Set
$$
v^{n+\half}=\half\left(v^{n+1}+v^n\right),
$$
$$
\pc v^{n+\half}=\frac{1}{\dd}\left(v^{n+1}-v^n\right),
$$
$$
\pc v^{n}=\frac{1}{2\dd}\left(v^{n+1}-v^{n-1}\right),
$$
$$
\ps v^{n}=\frac{1}{\dd^2}\left(v^{n+1}-2v^n+v^{n-1}\right),
$$
$$
v^{n;\theta}=\theta v^{n+1}+(1-2\theta)v^n+\theta v^{n-1},
$$
where $0\leq\theta\leq 1$. We also define the discrete $l^\infty$-norm
for  time-discrete functions by
$$
||v||_{l^\infty_\dd(0,T;L^2(\Omega))}=||v||_{l^\infty(L^2)}=
\max_{0\leq n\leq N}||v^n||_{L^2(\Omega)}.
$$
%


\section{Weak Formulation}
\setcounter{equation}{0}


The finite element approximation of the wave problem is based
on its weak formulation which is derived in the usual 
manner. Integrating by parts and using the data on the boundary  of
$\Omega$, we obtain the weak formulation \cite{JRW}: For any $t\geq 0$, find 
$(\ru(t),p(t))\in \rV\times W$ such that
%
\begin{eqnarray}
(\ru(0),\rv)&=&(\ru^0,\rv)\qquad \forall \rv\in \rV,\label{eq:w1-a}\\
(\ru_t(0),\rv)&=&(\rv^0,\rv)\qquad \forall \rv\in \rV,\label{eq:w1-b}\\
(\lambda^{-1}p(0),w)&=&(\nabla\cdot \ru^0,w)\qquad \forall w\in W,\label{eq:w1-c}\\
(\rho\ru_{tt}(t) ,\rv)+(p(t),\nd \rv)&=&(\rf(t),\rv)\qquad \forall \rv\in \rV,
\quad t>0,\label{eq:w1}\\
(\lambda^{-1}p(t),w)-(\nd \ru(t),w)&=&0\qquad \forall w\in W,\quad t>0, \label{eq:w2}
\end{eqnarray}
where $\rV$ and $W$ are given by
$$
\rV=\{\rv\in \rH(\Omega,\mbox{div})\,:\,\rv\cdot{\pmb\nu}|_{\Gamma_N}=0\},
$$
$$
W=H^{\half+\varepsilon}(\Omega) \mbox{ for any } \varepsilon>0.
$$
The present formulation requires less regularity on the displacement than
standard approaches. For instance in \cite{CDW-12,CDW-13} it is
necessary that $\nabla p\in \rH(\Omega,\mbox{div})$ so that 
$\nd\ru\in H^2(\Omega)$. Here, it is only required that $\nd\ru\in H^\half$, 
and it can be verified that the solution $\ru$ of problem 
(\ref{eq:o1})-(\ref{eq:o5}) with $p=\lambda\nd\ru$ is a solution to 
(\ref{eq:w1})-(\ref{eq:w2}), see \cite{JRW}.

Differentiate (\ref{eq:w2}) with respect to time to obtain
\begin{equation}\label{eq:w3}
(\lambda^{-1}p_t,w)-(\nd \ru_t,w)=0\qquad \forall w\in W.
\end{equation}
We next assume $\rf=0$ and choose $\rv=\ru_t$ and $w=p$ in (\ref{eq:w1}) and (\ref{eq:w3}), respectively, so that
\begin{eqnarray}
(\rho\ru_{tt} ,\ru_t)+(p,\nd \ru_t)&=&0,\label{eq:w4}\\
(\lambda^{-1}p_t,p)-(\nd \ru_t,p)&=&0 \label{eq:w5}.
\end{eqnarray}
By adding the two equations, we find that
\begin{equation}\label{eq:w6}
(\rho\ru_{tt} ,\ru_t)+(\lambda^{-1}p_t,p)=0,
\end{equation}
or
$$
\half \frac{d}{dt}\left|\left|\rho^\half\ru_{t} \right|\right|^2_{L^2(\Omega)}
+\half \frac{d}{dt}\left|\left|\la^{-\half} p \right|\right|^2_{L^2(\Omega)}=0.
$$
Thus, in the absence of forcing, the (continuous) energy
\begin{equation}\label{eq:w7}
\half \left|\left|\rho^\half\ru_{t} \right|\right|^2_{L^2(\Omega)}
+\half \left|\left|\la^{-\half} p \right|\right|^2_{L^2(\Omega)}
\end{equation}
is conserved for all time. It will be shown that a similar form of energy is conserved 
by the numerical solution of the wave problem.

\section{Finite Element Approximation}
\setcounter{equation}{0}
For the finite element approximation, we
let $\{{\cal E}_h\}_{h>0}$ be a quasi-uniform family of finite element 
partitions of $\Omega$, where $h$ is the maximum element diameter. 
Let $\rV_h\times W_h$ be any of the usual mixed finite element approximating 
subspaces of $\rV\times W$, that is, the Raviart-Thomas-Nedelec spaces 
\cite{R15,R16}, Brezzi-Douglas-Marini spaces \cite{R5}, or 
Brezzi-Douglas-Fortin-Marini spaces \cite{R4}.
For each of these mixed spaces there is a projection 
$\Pi_h:\rH(\Omega,\mbox{div})\rightarrow \rV_h$ such that
for any $\rz\in \rH(\Omega,\mbox{div})$
\begin{equation}\label{eq:w8}
(\nd\Pi_h\rz,w)=(\nd\rz,w)\quad \forall w\in W_h.
\end{equation}
We have the property that, if $\rz\in \rH(\Omega,\mbox{div})\cap \rH^k(\Omega)$, 
then 
\begin{equation}\label{eq:w9}
||\Pi_h\rz-\rz||_0\leq C h^j||\rz||_j, \quad 1\leq j\leq k,
\end{equation}
where $k$ is associated with the degree of polynomial and $||\cdot||_s$ is 
the standard Sobolev norm on $(H^s(\Omega))^m$. Here and in what 
follows, $C$ is a generic positive constant which is independent of $h$ and $\dd$.

For $\phi\in W$, we denote by ${\cal P}_h\phi$ the $L^2$-projection of $\phi$ onto $W_h$
defined by requiring that
\begin{equation}\label{eq:w10}
({\cal P}_h\phi,w)=(\phi,w)\quad\forall w\in W_h.
\end{equation}
If $\phi\in W\cap H^k(\Omega)$, then we also have
\begin{equation}\label{eq:w11}
||{\cal P}_h\phi-\phi||_s\leq C h^{j-s}||\phi||_j, \quad 0\leq s\leq k,\quad 0\leq j\leq k.
\end{equation}
The semidiscrete mixed finite element approximation to $(\ru(t),p(t))$
is to seek $(\rU(t),P(t))\in \rV_h\times W_h$ satisfying
\begin{eqnarray}
(\rU(0),\rv)&=&(\Pi_h\ru^0,\rv)\quad \forall \rv\in \rV_h,\label{eq:ww1}\\
(\rU_t(0),\rv)&=&(\Pi_h\rv^0,\rv)\quad \forall \rv\in \rV_h,\label{eq:ww2}\\
(P(0),w)&=&(p(0),w)\quad \forall w\in W_h,\label{eq:ww3}\\
(\rho\rU_{tt}(t),\rv)+(P(t),\nd\rv)  &=&(\rf(t),\rv)\quad \forall \rv\in
\rV_h,\quad t>0,\label{eq:ww4}\\
(\lambda^{-1}P(t),w)-(\nd\rU(t),w)&=&0\qquad\qquad  \forall w\in W_h,\quad
t>0.\label{eq:ww5}
\end{eqnarray}
Existence and uniqueness of a solution $(\rU(t),P(t))$ to 
the variational problem (\ref{eq:ww1})-(\ref{eq:ww5}) is shown in \cite{JRW}.


The fully discrete mixed finite element $\theta$-scheme is then defined by 
finding a sequence of pairs $(\rU^{n},P^{n})\in \rV_h\times W_h$, 
$0\leq n\leq N$, such that 
\begin{eqnarray}
(\rU^0,\rv)&=&(\Pi_h \ru^0,\rv)\quad \forall \rv\in \rV_h,\label{eq:www1}\\
(P^0,w)&=&(p^0,w)\quad \forall w\in W_h,\label{eq:www2}\\
\left(\rho\pc\rU^\half ,\rv\right)+\theta^2\dd\left(\pc
P^\half,\nd\rv\right)
+\frac{\dd}{2}(P^0,\nd\rv)&=&\left(\frac{\dd}{2}\rf^0+\theta\dd^2\pc\rf^\half,\rv\right)\nonumber\\
&& +\left(\rho\Pi_h\rv^0,\rv\right)\quad \forall \rv\in
\rV_h, \label{eq:www3}\\
(\rho\ps\rU^n,\rv)+(P^{n;\theta},\nd\rv)  &=&(\rf^{n;\theta},\rv)\quad \forall
\rv\in \rV_h,\label{eq:www4}\\
(\lambda^{-1}P^{n+1/2},w)-(\nd\rU^{n+1/2},w)&=&0\quad \forall w\in
W_h.\label{eq:www5}
\end{eqnarray}
Equation (\ref{eq:www3}) is derived from the following expansion:
$$
\ru^{1}=\ru^0+\dd \rv^0+\dd^2\left[\theta
\ru^{1}_{tt}+\left(\frac{1}{2}-\theta\right)\ru^{0}_{tt}\right]+{\cal O}(\dd^3).
$$
The present $\theta$-scheme is explicit in time if $\theta=0$ and implicit otherwise. 
The existence and uniqueness of a solution to the resulting linear system 
for a nonzero value of $\theta$ 
follows from the unisolvancy of the mixed formulation of the following elliptic
problem:
\begin{eqnarray*}
\nd (\lambda\nabla \phi)+\frac{1}{\theta\dd^2}\rho \phi &=&0 \qquad \mbox{ in } \Omega,\\
\phi &=&0 \qquad \mbox{ on } \partial\Omega.
\end{eqnarray*}
The explicit case has been considered in \cite{JRW}.
As expected from an explicit scheme, the method is conditionally stable.
As a stability constraint, it requires to choose
$$\dd={\cal O}(h).$$
In the next sections, stability and convergence properties of the  
proposed $\theta$-scheme are analyzed.



\section{Stability Analysis}
\setcounter{equation}{0}
We derive sharp stability bounds based on the energy technique 
and show that the proposed scheme conserves certain energy. 
We consider (\ref{eq:www4}) and (\ref{eq:www5}) for the homogeneous case
\begin{eqnarray}
(\rho\ps \rU^n,\rv)+(\pn,\nd \rv)&=&0\qquad \forall \rv\in \rV_h,\label{eq:a1}\\
(\lambda^{-1}P^{n+1/2},w)-(\nd \rU^{n+1/2},w)&=&0\qquad \forall w\in W_h \label{eq:a2}.
\end{eqnarray}
%
We will make use of the {\it inverse assumption}, 
which states that there exists a constant
$C_0$ independent of $h$, such that
\begin{equation}\label{eq:ii}
||\nd\phi||_{L^2(\Omega)}\leq C_0 h^{-1}||\phi||_{L^2(\Omega)}
\end{equation}
for all $\phi\in W_h$.
The following stability result holds.
\begin{Th}\label{th:1}
The fully discrete scheme $(\ref{eq:www1})$-$(\ref{eq:www5})$  is stable if
\begin{equation}
\label{eq:cfl-1}
\dd^2\left(\frac{1}{4}-\theta\right)\frac{C_0^2\lambda_1}{h^2\rho_0}\leq 1,
\end{equation}
and conserves the discrete energy
\begin{equation}
\label{eq:cfl-ee}
E_h^{n+\half}=\frac{1}{2}\left[||\rho^{\half}\pc \rU^{n+\half}||^2+
\dd^2\left(\theta-\frac{1}{4}\right)
||\lambda^{-\half}\pc P^{n+\half}||^2+||\lambda^{-\half}P^{n+\half}||^2\right].
\end{equation}
The scheme is unconditionally stable if  $\theta\geq 1/4$.
\end{Th}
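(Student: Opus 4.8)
The plan is to derive a discrete analogue of the energy identity \eqref{eq:w6} by choosing the test functions that mimic the continuous argument. In \eqref{eq:a1} I would take $\rv=\pc\rU^n=\tfrac{1}{2\dd}(\rU^{n+1}-\rU^{n-1})$, which is the discrete velocity centered at $t^n$; this is the natural replacement for $\ru_t$. The term $(\rho\ps\rU^n,\pc\rU^n)$ telescopes: since $\ps\rU^n=\pc\pc\rU^{n}$ in the appropriate sense, one gets $(\rho\ps\rU^n,\pc\rU^n)=\tfrac{1}{2\dd}\big(\|\rho^{\half}\pc\rU^{n+\half}\|^2-\|\rho^{\half}\pc\rU^{n-\half}\|^2\big)$, using the algebraic identity $(a-b,a+b)=\|a\|^2-\|b\|^2$ with $a=\pc\rU^{n+\half}$, $b=\pc\rU^{n-\half}$. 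For the pressure term I would rewrite $\pn=P^{n;\theta}$ as $P^{n;\theta}=P^{n+\half;*}+\dd^2(\theta-\tfrac14)\ps P^n$-type decomposition; more precisely, using $v^{n;\theta}=\tfrac12(v^{n+\half}+v^{n-\half})+\dd^2(\theta-\tfrac14)\ps v^n$ I split $(\pn,\nd\pc\rU^n)$ accordingly.

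Next I would use \eqref{eq:a2}, differenced in time, to convert $\nd\pc\rU^n$ into a pressure quantity: subtracting \eqref{eq:a2} at level $n-\half$ from level $n+\half$ and dividing by $\dd$ gives $(\lambda^{-1}\pc P^n,w)=(\nd\pc\rU^n,w)$ for all $w\in W_h$, so choosing $w$ to be the relevant combination of $P^{n\pm\half}$ lets me replace $(\pn,\nd\pc\rU^n)$ by $(\lambda^{-1}\pc P^n, P^{n;\theta})$. Then the same $(a-b,a+b)$ telescoping identity applied to the $P^{n+\half}$ part yields $\tfrac{1}{2\dd}\big(\|\lambda^{-\half}P^{n+\half}\|^2-\|\lambda^{-\half}P^{n-\half}\|^2\big)$, and the $\ps P^n$ part combined with $\pc P^n$ telescopes into the $\dd^2(\theta-\tfrac14)\|\lambda^{-\half}\pc P^{n+\half}\|^2$ difference. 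Collecting everything, I obtain $E_h^{n+\half}=E_h^{n-\half}$, i.e.\ the discrete energy \eqref{eq:cfl-ee} is exactly conserved, with no hypothesis on $\dd$ needed for the \emph{identity} itself.

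Stability is then a matter of showing $E_h^{n+\half}$ controls the physically meaningful norms, in particular that it is nonnegative and bounds $\|\rho^\half\pc\rU^{n+\half}\|^2$ and $\|\lambda^{-\half}P^{n+\half}\|^2$ from above. When $\theta\ge\tfrac14$ the coefficient $\dd^2(\theta-\tfrac14)$ is nonnegative, every term in $E_h^{n+\half}$ is a square, and unconditional stability follows immediately. When $\theta<\tfrac14$ the middle term has the wrong sign, so I must absorb $-\dd^2(\tfrac14-\theta)\|\lambda^{-\half}\pc P^{n+\half}\|^2$ into the positive terms. Here is where the inverse assumption \eqref{eq:ii} enters: from \eqref{eq:a2} at level $n+\half$, $(\lambda^{-1}P^{n+\half},w)=(\nd\rU^{n+\half},w)$; I would instead difference \eqref{eq:a2} to get $\|\lambda^{-\half}\pc P^{n+\half}\|$ in terms of $\nd\pc\rU^{n+\half}$, then apply \eqref{eq:ii} to $\pc P^{n+\half}\in W_h$ — actually the cleaner route is to bound $\dd^2(\tfrac14-\theta)\|\lambda^{-\half}\pc P^{n+\half}\|^2\le \dd^2(\tfrac14-\theta)\tfrac{\lambda_1}{\lambda_0}C_0^2h^{-2}\|\lambda^{-\half}(\rU^{n+1}-\rU^n)/\dd\|^2$-style estimates, tracing constants to arrive exactly at condition \eqref{eq:cfl-1}, under which $E_h^{n+\half}\ge c(\|\rho^\half\pc\rU^{n+\half}\|^2+\|\lambda^{-\half}P^{n+\half}\|^2)$ for some $c>0$.

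The main obstacle I anticipate is the bookkeeping in the $\theta<\tfrac14$ case: getting the sharp constant $C_0^2\lambda_1/(h^2\rho_0)$ rather than a loose one requires carefully pairing the negative term against $\|\rho^\half\pc\rU^{n+\half}\|^2$ (not against the pressure term), using $\pc P^{n+\half}\in W_h$ together with the differenced constitutive relation \eqref{eq:a2} and the inverse inequality, and checking that the resulting quadratic form in $(\pc\rU^{n+\half},P^{n+\half})$ is positive definite precisely when \eqref{eq:cfl-1} holds. The telescoping identities themselves are routine once the right test functions are identified, and the initial-data handling via \eqref{eq:www3} only affects $E_h^{1/2}$, not the propagation step.
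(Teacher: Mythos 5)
Your proposal is correct and follows essentially the same route as the paper: the test functions $\rv=\pc\rU^n$ and the time-differenced constitutive relation, the decomposition $P^{n;\theta}=\dd^2(\theta-\tfrac14)\ps P^n+\tfrac12(P^{n+\half}+P^{n-\half})$, the telescoping to $E_h^{n+\half}=E_h^{n-\half}$, and the inverse inequality applied to $\pc P^{n+\half}$ via the differenced form of $(\ref{eq:a2})$ to absorb the negative term into $\|\rho^{\half}\pc\rU^{n+\half}\|^2$ when $\theta<\tfrac14$ are all exactly the paper's argument. The only detail to tighten is the constant in your final inverse-inequality step, which should come out as $\|\lambda^{-\half}\pc P^{n+\half}\|\leq\frac{C_0\lambda_1^{\half}}{h\rho_0^{\half}}\|\rho^{\half}\pc\rU^{n+\half}\|$, yielding $(\ref{eq:cfl-1})$ precisely as you anticipated.
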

\begin{proof}  If we subtract (\ref{eq:a2}) from itself, with $n+1/2$ replaced 
by $n-1/2$, we find that
\begin{equation}
\label{eq:s1}
(\lambda^{-1}(P^{n+1}-P^{n-1},w)-(\nabla\cdot(U^{n+1}-U^{n-1}),w)=0.
\end{equation}
As (\ref{eq:a1}) holds for all $\rv\in \rV_h$ and (\ref{eq:s1}) holds for all 
$w\in W_h$,
we choose $\rv=\pc \rU^n$ and $w=\frac{\pn}{2\dd}$ so that
\begin{eqnarray}
(\rho\ps \rU^n,\pc \rU^n)+(\pn,\nd \pc \rU^n)&=&0,\label{eq:s2}\\
(\lambda^{-1}\pc P^n,\pn)- (\nd \pc \rU^n,\pn)&=&0\label{eq:s3}.
\end{eqnarray}
By adding (\ref{eq:s2}) and (\ref{eq:s3}) we obtain
\begin{equation}
\label{eq:s4}
(\rho\ps \rU^n,\pc \rU^n)+(\lambda^{-1}\pc P^n,\pn)=0.
\end{equation}
Note that
\begin{eqnarray}
\label{eq:s5}
\pn&=&\dd^2\theta\ps P^n+P^n\nonumber\\
&=&\dd^2\left(\theta-\frac{1}{4}\right)\ps
P^n+\frac{1}{2}\left(P^{n+\half}+P^{n-\half}\right).
\end{eqnarray}
%
%
Hence, (\ref{eq:s4}) can be rewritten as
\begin{equation}
\label{eq:s6}
(\rho\ps \rU^n,\pc \rU^n)+
\dd^2\left(\theta-\frac{1}{4}\right)(\lambda^{-1}\ps P^n,\pc P^n)+
\frac{1}{2}\left(\lambda^{-1}(P^{n+\half}+P^{n-\half}),\pc P^n\right)=0.
\end{equation}
Using that
$$
\bar\partial_{t}\rU^n=\frac{\pc\rU^{n+\half}+\pc\rU^{n-\half} }{2},\qquad
\ps \rU^n=\frac{\pc\rU^{n+\half}-\pc\rU^{n-\half}}{\dd},
$$
we deduce that
\begin{eqnarray*}
(\rho\ps \rU^n,\pc \rU^n) &=&
\frac{1}{2\dd}\,(\rho\pc \unp -\rho\pc \unm,\pc\unp+\pc\unm)\\
&=&
\frac{1}{2\dd}\left[ (\rho\pc \unp,\pc \unp)- (\rho\pc \unm,\pc \unm)  \right],
\end{eqnarray*}
and similarly
$$
(\lambda^{-1}\ps P^n,\pc P^n) = \frac{1}{2\dd}
\left[ (\lambda^{-1}\pc \pnp,\pc \pnp)- (\lambda^{-1}\pc \pnm,\pc \pnm)  \right].
$$
We also have
\begin{eqnarray*}
\left(\lambda^{-1}(\pnp+\pnm),\pc P^n\right) &=&
\frac{1}{\dd}\,(\lambda^{-1}\pnp+\lambda^{-1}\pnm,\pnp-\pnm)\\
&=&
\frac{1}{\dd}\left[ (\lambda^{-1}\pnp,\pnp)-(\lambda^{-1}\pnm,\pnm) \right].
\end{eqnarray*}
Hence,  (\ref{eq:s6}) is equivalent to
$$
\frac{1}{\dd}\left(E_h^{n+\half}-E_h^{n-\half}\right)=0,
$$
where $E^{n+\half}_h$ is the quantity defined by (\ref{eq:cfl-ee}).
This relation indicates that $E_h^{n+\half}$ is conserved for all
time,  which guarantees the stability of the scheme
if and only if $E_h^{n+\half}$ defines a positive energy. A sufficient condition 
is that
$$
\left|\left|\rho^{\half}\pc\unp\right|\right|^2+\dd^2\left(\theta-\frac{1}{4}\right)
\left|\left|\lambda^{-\half}\pc\pnp\right|\right|^2\geq 0
$$
for all $n\geq 0$.  
Clearly, the scheme is unconditionally stable when $\theta\geq 1/4$. Now, 
using Cauchy-Schwarz inequality and the inverse assumption (\ref{eq:ii}), we
obtain
\begin{eqnarray*}
\left(\lambda^{-1}\pc\pnp,w\right)&=&\left(\nd\pc\unp,w\right)\\
&\leq& \left|\left|\nd\pc\unp\right|\right|_{L^2(\Omega)}||w||_{L^2(\Omega)}\\
&\leq&
\frac{C_0}{h}\left|\left|\pc\unp\right|\right|_{L^2(\Omega)}||w||_{L^2(\Omega)}.
\end{eqnarray*}
By setting $w=\pc\pnp$, we see that
\begin{eqnarray*}
\left|\left|\lambda^{-\half}\pc \pnp\right|\right|^2_{L^2(\Omega)}
&\leq& \frac{C_0}{h}\left|\left|\pc\unp\right|\right|_{L^2(\Omega)}
\left|\left|\pc \pnp\right|\right|_{L^2(\Omega)}\\
&\leq& \frac{C_0 \lambda_1^{\half}}{h\rho_0^\half}\left|\left|\rho^\half\pc\unp\right|\right|_{L^2(\Omega)}
\left|\left|\lambda^{-\half}\pc \pnp\right|\right|_{L^2(\Omega)},
\end{eqnarray*}
or
$$
\left|\left|\lambda^{-\half}\pc \pnp\right|\right|_{L^2(\Omega)}\leq
\frac{C_0
\lambda_1^{\half}}{h\rho_0^\half}\left|\left|\rho^{\half}\pc\unp\right|\right|_{L^2(\Omega)}.
$$
Hence,  a sufficient condition for stability is given by
$$
||\rho^{\half}\pc\unp||^2+\dd^2\left(\theta-\frac{1}{4}\right)
\frac{C_0^2\la_1}{h^2\rho_0}||\rho^{\half}\pc\unp||^2\geq 0,
$$
which completes the proof.
\end{proof}

 
The case with $\theta=1/4$  is interesting
because the form of the discrete energy in this case
is similar to that of the continuous problem. In addition, one can verify that 
the time truncation error is minimized over the set of all $\theta\geq 1/4$  
when $\theta=1/4$.


\section{Convergence Analysis}
\setcounter{equation}{0}

In this section, we prove optimal convergence of the fully discrete finite 
element solution  in the $L^\infty(L^2)$ norm.
Some of the techniques used in the
proofs can be found in previous works  \cite{Kar-FE-Theta, Karaa-2012}.
In order to estimate the errors in the finite element approximation,
we define the auxiliary functions
$$
\rc^n=\rU^n-\Pi_h\ru^n,\quad \re^n=\ru^n-\Pi_h\ru^n, \qquad \rx^n=P^n-{\cal P}_hp^n,
\qquad \zeta=p^n-{\cal P}_hp^n,
$$
where $\Pi_h$ and ${\cal P}_h$ are defined in Section~4.
From (\ref{eq:w1})-(\ref{eq:w2}) and (\ref{eq:www4})-(\ref{eq:www5}),
and the properties of the projections
$\Pi_h$ and ${\cal P}_h$, we arrive at
\begin{eqnarray}
(\rho\ps \rc^n,v)+(\rx^\nt,\nd \rv)&=& (\rho\ps \re^n,\rv)+({\pmb r}^n,\rv)\qquad
\forall \rv\in \rV_h,\quad n\geq 1,\label{eq:b1}\\
(\lambda^{-1}\rx^{n+1/2},w)-(\nd \rc^{n+1/2},w)&=&(\lambda^{-1}\zeta^{n+1/2},w)\quad
\qquad\forall w\in W_h,\quad n\geq 0,\label{eq:b2}
\end{eqnarray}
where ${\pmb r}^n=\rho (\ru^\nt_{tt}-\ps \ru^n)$. 
Another equation has to be derived for the initial errors $\rc^1$ and $\rx^1$.  
Consider (\ref{eq:w1}) at $n=0$ and $n=1$, respectivey,  
and subtract the resulting equations so that
\begin{equation}\label{eq:cv2}
\left(\rho\pc\ru_{tt}^\half,\rv\right)+\left(\pc p^\half,\nd\rv\right)=
\left(\pc \rf^\half,\rv\right).
\end{equation}
A use of Taylor's formula with integral remainder yields
\begin{equation}\label{eq:cv1}
\pc\ru^\half=\rv^0+\frac{\dd}{2}\ru_{tt}^0+\frac{1}{2\dd}
\int_0^\dd (\dd-t)^2\frac{\partial^3 \ru}{\partial t^3}(t)\,dt.
\end{equation}
Using (\ref{eq:cv2}) and (\ref{eq:cv1}), we readily obtain
\begin{eqnarray}\label{eq:cv3}
\left(\rho\pc\ru^\half,\rv\right)+\theta\dd^2\left(\pc p^\half,\nd\rv\right)&=&
\theta\dd^2\left(\pc\rf^\half,\rv\right)-\theta\dd^2
\left(\rho\pc\ru_{tt}^\half,\rv\right)\nonumber\\
&&+(\rho\rv^0,\rv)+\frac{\dd}{2}(\rho\ru_{tt}^0,\rv)\nonumber\\
&&+\frac{1}{2\dd}
\int_0^\dd (\dd-t)^2\left(\rho\frac{\partial^3 \ru}{\partial t^3},\rv\right)\,dt.
\end{eqnarray}
%
%
%
Subtracting (\ref{eq:cv3}) from (\ref{eq:www3}) and taking into account
(\ref{eq:www2}) and (\ref{eq:w1}) to arrive that
\begin{equation}\label{eq:cv5}
\begin{split}
\left(\rho\pc\rc^\half ,\rv\right)+\theta\dd^2\left(\pc \rx^\half,\nd\rv\right)
&+\frac{\dd}{2}\left(\rx^0,\nd\rv\right)=
\left(\rho\pc\re^\half
,\rv\right)+\left(\rho(\Pi_h\rv^0-\rv^0),\rv\right)\\
&
+\theta\dd^2\left(\rho\pc \ru_{tt}^\half,\rv\right)
-\frac{1}{2\dd}
\int_0^\dd (\dd-t)^2\left(\rho\frac{\partial^3 \ru}{\partial t^3},\rv\right)\,dt.
\end{split}
\end{equation}
%
Note that $\rx^0=0$ and $\rc^0=0$. We now state and prove our convergence result.
\begin{Th}\label{Th:1} If $\ru\in L^\infty({\pmb H}(\Omega;{\rm div}))$,
$\frac{\partial^3 \ru}{\partial t^3}\in L^1({\pmb L}^2(\Omega))$,
$\frac{\partial^4 \ru}{\partial t^4}\in L^\infty({\pmb L}^2(\Omega))$,
and $p\in L^\infty(L^2(\Omega))$,
then for $\{\rU^n,P^n\}$ defined by
$(\ref{eq:www1})$-$(\ref{eq:www5})$ there exists a constant $C$ independent of
$h$ and $\dd$ such that if
\begin{equation}
\label{eq:cfl2}
\dd^2\left(\frac{1}{4}-\theta\right)\frac{\lambda_1C_0^2}{\rho_0h^2}<\frac{1}{2},
\end{equation}
then the following a priori error estimate holds:
\begin{equation}\label{eq:cv7}
\begin{split}
\left|\left|\rho^\half(\ru-\rU)\right|\right|_{l^\infty(L^2)}+&
\left|\left|\la^{-\half}(p-P)\right|\right|_{l^\infty(L^2)}\\
&
 \leq C(h^r+\dd^2)\left(||\ru||_{L^\infty(H^r)}+
\left|\left|\frac{\partial^3 \ru}{\partial t^3} \right|\right|_{L^\infty(L^2)}+
 ||p||_{L^\infty(L^2)}\right),
\end{split}
\end{equation}
where $r$ is associated with the degree of the finite element polynomial.
\end{Th}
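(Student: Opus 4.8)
The plan is to estimate the projected errors $\rc^n = \rU^n - \Pi_h\ru^n$ and $\rx^n = P^n - \pp_h p^n$ via a discrete energy argument that mirrors the stability proof of Theorem~\ref{th:1}, and then recover the full error by the triangle inequality together with the projection estimates \eqref{eq:w9} and \eqref{eq:w11}. The error equations \eqref{eq:b1}--\eqref{eq:b2} have the same structure as the homogeneous scheme \eqref{eq:a1}--\eqref{eq:a2} but with consistency terms on the right-hand side: $(\rho\ps\re^n,\rv)$, the time-truncation residual $(\rr^n,\rv) = (\rho(\ru^{n;\theta}_{tt}-\ps\ru^n),\rv)$, and $(\lambda^{-1}\znp,w)$. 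So I would take $\rv = \pc\rc^n$ in \eqref{eq:b1} and $w = \pnp/(2\dd)$-type test functions in the differenced version of \eqref{eq:b2}, exactly as in the stability proof, to produce the telescoping identity
\begin{equation*}
\frac{1}{\dd}\left(\tilde E_h^{n+\half}-\tilde E_h^{n-\half}\right) = (\rho\ps\re^n,\pc\rc^n) + (\rr^n,\pc\rc^n) + \left(\text{terms from }\znp\right),
\end{equation*}
where $\tilde E_h^{n+\half}$ is the analogue of \eqref{eq:cfl-ee} with $\rU,P$ replaced by $\rc,\rx$.

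Next I would bound the right-hand side. The key point is that \eqref{eq:cfl2} is a \emph{strict} version of the CFL condition \eqref{eq:cfl-1}, so by the inverse-assumption argument at the end of the stability proof one gets $\tilde E_h^{n+\half} \geq c\,\|\rho^\half\pc\cnp\|^2$ with $c \geq \frac12$ — i.e. the energy controls $\|\pc\cnp\|$ with a constant bounded away from zero, uniformly in $h,\dd$. This is what makes the Gronwall step work. For the consistency terms: $\|\ps\re^n\|$ is handled by commuting $\ps$ with $\Pi_h$ and using \eqref{eq:w9} plus the regularity $\ru\in L^\infty(\rH(\Omega,\mathrm{div}))$; the truncation term $\|\rr^n\|$ is $\mathcal{O}(\dd^2)$ by Taylor expansion of $\ru^{n;\theta}_{tt}-\ps\ru^n$ using $\partial_t^4\ru\in L^\infty(L^2)$; and the $\znp$ contribution is $\mathcal{O}(h^r)$ via \eqref{eq:w11}. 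Each of these gets paired with $\|\pc\cnp\| \le C(\tilde E_h^{n+\half})^{1/2} + C(\tilde E_h^{n-\half})^{1/2}$ using Cauchy--Schwarz, and the $\znp$ term also needs summation by parts in $n$ to move the difference quotient off $\rc$ (otherwise one loses a power of $\dd$). Summing over $n$ from $1$ to some $m\le N-1$, multiplying by $\dd$, and applying the discrete Gronwall lemma yields
\begin{equation*}
\max_n \tilde E_h^{n+\half} \le C\left(\tilde E_h^{\half} + (h^r+\dd^2)^2\big(\|\ru\|_{L^\infty(H^r)}^2 + \|\partial_t^3\ru\|_{L^\infty(L^2)}^2 + \|p\|_{L^\infty(L^2)}^2\big)\right).
\end{equation*}

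The starting value $\tilde E_h^{\half}$ must be estimated separately from the initialization equation \eqref{eq:cv5}: choosing $\rv = \pc\rc^\half$ there, using $\rx^0=0$, $\rc^0=0$, the approximation property of $\Pi_h$ applied to $\rv^0$, and the integral remainder term bounded by $\dd^{?}\,\|\partial_t^3\ru\|$, one shows $\|\rho^\half\pc\cnp[0]\| = \mathcal{O}(h^r+\dd^2)$ and hence $\tilde E_h^{\half} = \mathcal{O}((h^r+\dd^2)^2)$; note the $\theta\dd^2(\rho\pc\ru_{tt}^\half,\rv)$ term on the right of \eqref{eq:cv5} is also $\mathcal{O}(\dd^2)$ after one more Taylor step, so no regularity beyond what is assumed is needed. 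Finally, from $\max_n\tilde E_h^{n+\half}$ one reads off bounds on $\|\rho^\half\pc\cnp\|$ and $\|\lambda^{-\half}\xnp\|$; to convert the former into a bound on $\|\rc^n\|$ itself (which is what appears in \eqref{eq:cv7}) I would write $\rc^n = \rc^0 + \dd\sum_{k<n}\pc\ck$ and use $\rc^0=0$, and similarly pass from $\xnp$ to $\rx^n$ by averaging. The triangle inequality with \eqref{eq:w9}, \eqref{eq:w11} then gives \eqref{eq:cv7}.

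The main obstacle I anticipate is the handling of the $\znp = p^n - \pp_h p^n$ term in \eqref{eq:b2}: naively pairing $(\lambda^{-1}\znp, \xnp)$ after the energy manipulation produces a term like $\dd^{-1}(\lambda^{-1}(\znp-\zeta^{n-\half}),\ldots)$ that threatens to destroy the $\mathcal{O}(h^r)$ rate unless one either summation-by-parts in $n$ (shifting the difference onto $\rc$, which is under control) or exploits the special test-function structure so that only $\|\znp\|$ (not a difference quotient of it) enters — this bookkeeping is the delicate part and is presumably where the cited techniques of \cite{Kar-FE-Theta, Karaa-2012} come in. A secondary subtlety is making sure the strict inequality \eqref{eq:cfl2} (rather than the borderline \eqref{eq:cfl-1}) is genuinely used to keep the coefficient $c$ of $\|\rho^\half\pc\cnp\|^2$ in $\tilde E_h^{n+\half}$ bounded below by a fixed positive constant, since a vanishing coefficient would break the final extraction step.
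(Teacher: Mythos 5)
Your plan is viable but takes a genuinely different route from the paper. You propose to test the error equations with the discrete velocity $\pc\rc^n$ (mirroring the stability proof), obtain a telescoping identity for a discrete error energy, apply a discrete Gronwall lemma, and then re-integrate in time to convert the bound on $\|\pc\rc\|$ into one on $\|\rc^n\|$. The paper instead uses a Baker-type summation device: it first sums the second-difference error equation (\ref{eq:b3}) in time, turning $\ps\rc^n$ into $\pc\cnp-\pc\rc^{\half}$ and absorbing the initialization via (\ref{eq:cv5}), introduces the discrete antiderivative $\phi^n=\dd\sum_{i<n}\rx^{i+\half}$ of the pressure error, and then tests with $\rv=\cnp$ and $w=\fnp$, so that the left side telescopes directly to $\|\rho^\half\rc^{n+1}\|^2+\|\la^{-\half}\phi^{n+1}\|^2+\dd^2(\theta-\tfrac14)\|\la^{-\half}\rx^{n+1}\|^2$ with no Gronwall step and no final re-integration. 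The payoff of this device is precisely the point you flag as your main obstacle: in (\ref{eq:b7})--(\ref{eq:b8}) the projection error $\zeta$ only ever appears undifferenced, as $(\lambda^{-1}\znp,\fnp)$ and as $\dd^2(\theta-\tfrac14)(\lambda^{-1}\znp,\pc\xnp)$ with the difference quotient landing on $\rx$ and carrying the harmless factor $\dd^2$; hence only $\|\znp\|=O(h^r)$ enters and no regularity of $p_t$ is needed, consistent with the hypothesis $p\in L^\infty(L^2)$. This is exactly the second of the two remedies you mention ("exploit the special test-function structure"), and it is the one the paper adopts; your summation-by-parts repair could be made to work but is the more laborious path, and as written you defer it rather than carry it out, so that step is the one you would still need to supply. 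Your treatment of the consistency terms ($\ps\re^n$, ${\pmb r}^n$, the initialization residual from (\ref{eq:cv5})) matches the paper's. Two smaller caveats: for $\theta<\tfrac14$ the inverse-estimate coercivity argument must be run on the error equation (\ref{eq:b2}), which carries an extra $\pc\zeta$ correction relative to the clean stability bound; and recovering $\rx^n$ at integer levels from the half-level averages $\xnp$ is not a simple averaging (inverting $\xnp=\tfrac12(\rx^{n+1}+\rx^n)$ gives an alternating sum), though the paper is equally terse on how the pressure bound in (\ref{eq:cv7}) is extracted from (\ref{eq:b13}).
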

\begin{proof} We first rearrange (\ref{eq:b1}) in the form
\begin{equation}
\label{eq:b3}
(\rho\ps \rc^n,\rv)+\dd^2\left(\theta-\frac{1}{4}\right) (\ps \rx^n,\nd \rv)
+\half\left(\xnp+\xnm,\nd \rv\right)
= (\rho\ps \re^n,v)+({\pmb r}^n,\rv).
\end{equation}
Summing over time levels and multiplying through by $\dd$ yields
\begin{equation}
\label{eq:b4}
\begin{split}
(\rho\pc \cnp-\rho\pc &\rc^{\half},\rv)+\dd^2\left(\theta-\frac{1}{4}\right)
\left(\pc \xnp-\pc\rx^\half,\nd \rv\right)\\
&+\frac{\dd}{2}\sum_{i=1}^{n}\left(\rx^{i+\half}+\rx^{i-\half},\nd \rv\right)
=\left(\rho\pc \re^{n+\half}-\rho\pc
\re^{\half},\rv\right)+\left(\dd\sum_{i=1}^{n}{\pmb r}^i,v\right).
\end{split}
\end{equation}
Upon defining
$$
\phi^0=0, \qquad \phi^n=\dd\sum_{i=0}^{n-1}\rx^{i+\half},
$$
we verify that
$$
\phi^{n+\half}=\frac{\dd}{2} \rx^\half+\frac{\dd}{2}
\sum_{i=1}^{n}\left(\rx^{i+\half} +\rx^{i-\half}\right).
$$
Taking into account (\ref{eq:cv5}) and that
$\pc\rx^\half=\frac{2}{\dd}\rx^\half$,  (\ref{eq:b4}) becomes
\begin{equation}
\label{eq:b5}
(\rho\pc \cnp,\rv)+\dd^2\left(\theta-\frac{1}{4}\right)
\left(\pc \xnp,\nd \rv\right)+
\left(\phi^{n+\half},\nd \rv\right)=\left(\rho\pc\re^{n+\half},\rv\right)
+\left(\rR^n,\rv\right),
\end{equation}
where
\begin{eqnarray*}
\rR^n=\dd\sum_{i=1}^{n}{\pmb r}^i+\rho(\Pi_h\rv^0-\rv^0)+\theta\dd^2
\rho\pc \ru_{tt}^\half
-\frac{1}{2\dd}
\int_0^\dd (\dd-t)^2\rho\frac{\partial^3 \ru}{\partial t^3}\,dt.
\end{eqnarray*}
Since $\pc\phi^{n+\half}=\xnp$, (\ref{eq:b2}) reads
\begin{equation}
\label{eq:b6}
\left(\lambda^{-1}\pc\phi^{n+\half},w\right)-\left(\nd \cnp,w\right)=
\left(\lambda^{-1}\znp,w\right).
\end{equation}
Choosing $v=\cnp$ and $w=\fnp$ in (\ref{eq:b5}) and (\ref{eq:b6}),
respectively, and adding the resulting equations, we arrive at
\begin{equation}
\label{eq:b7}
\begin{split}
(\rho\pc \cnp,\cnp)+&\dd^2\left(\theta-\frac{1}{4}\right)
\left(\pc \xnp,\nd\cnp \right)+\left(\lambda^{-1}\pc\fnp,\fnp\right)\\
&
=\left(\rho\pc\re^{n+\half},\cnp\right)+\left(\rR^n,\cnp\right)+\left(\lambda^{-1}\znp,\fnp\right).
\end{split}
\end{equation}
Again, choose $w=\pc\xnp$ in (\ref{eq:b2}) so that
$$
\left(\pc\xnp,\nd\cnp\right)=\left(\lambda^{-1}\xnp,\pc\xnp\right)-\left(\lambda^{-1}\znp,
\pc\xnp\right).
$$
Substitution into (\ref{eq:b7}) yields
\begin{equation}
\label{eq:b8}
\begin{split}
(\rho\pc \cnp,\cnp)+&\dd^2\left(\theta-\frac{1}{4}\right)
\left(\lambda^{-1}\pc\xnp,\xnp\right)
+\left(\lambda^{-1}\pc\fnp,\fnp\right)\\
=&\dd^2\left(\theta-\frac{1}{4}\right) \left(\lambda^{-1}\znp,\pc\xnp\right)+
\left(\rho\pc\re^{n+\half},\cnp\right)\\
&+\left(\rR^n,\cnp\right)+\left(\lambda^{-1}\znp,\fnp\right).
\end{split}
\end{equation}
The terms on the right-hand side of (\ref{eq:b8}) are bounded using Cauchy-Schwarz
inequality as
\begin{eqnarray*}
\left(\lambda^{-1}\znp,\pc\xnp\right)&\leq&
\left|\left|\lambda^{-\half}\znp\right|\right|_{L^2(\Omega)}
\left|\left|\lambda^{-\half}\pc\xnp\right|\right|_{L^2(\Omega)}\\
\left(\rho\pc\re^{n+\half},\cnp\right)&\leq&
\left|\left|\rho \pc\re^{n+\half}  \right|\right|_{L^2(\Omega)}
\left|\left| \cnp  \right|\right|_{L^2(\Omega)}\\
\left(\rR^n,\cnp\right)&\leq&
\left|\left| \rR^n  \right|\right|_{L^2(\Omega)}
\left|\left| \cnp  \right|\right|_{L^2(\Omega)}\\
\left(\lambda^{-1}\znp,\fnp\right)&\leq&
\left|\left| \lambda^{-\half}\znp \right|\right|_{L^2(\Omega)}
\left|\left| \lambda^{-\half}\fnp \right|\right|_{L^2(\Omega)}.
\end{eqnarray*}
We now distinguish the cases where $\theta\geq \frac{1}{4}$ and
$\theta< \frac{1}{4}$.
In the first case, we
sum on (\ref{eq:b8}) over time levels and multiply through by $2\dd$.
This results in
\begin{equation}
\label{eq:b10}
\begin{split}
&\left|\left|\rho^\half\rc^{n+1}\right|\right|^2_{L^2(\Omega)}-
\left|\left|\rho^\half\rc^0\right|\right|^2_{L^2(\Omega)}+
\left|\left|\la^{-\half}\phi^{n+1}\right|\right|^2_{L^2(\Omega)}-
\left|\left|\la^{-\half}\phi^{0}\right|\right|^2_{L^2(\Omega)} \\
&\qquad
+\dd^2\left(\theta-\frac{1}{4}\right)\left(
\left|\left|\la^{-\half}\rx^{n+1}\right|\right|^2_{L^2(\Omega)}-
\left|\left|\la^{-\half}\rx^{0}\right|\right|^2_{L^2(\Omega)} \right)\\
&\leq2\dd^2\left(\theta-\frac{1}{4}\right)\sum_{i=0}^n\left|\left|\la^{-\half}\zeta^{i+\half}
\right|\right|_{L^2(\Omega)}
\left( \left|\left|\la^{-\half}\rx^{i+1} \right|\right|_{L^2(\Omega)}+
\left|\left|\la^{-\half}\rx^{i} \right|\right|_{L^2(\Omega)}\right)\\
&\qquad+2\dd \sum_{i=0}^n\left|\left|\rc^{i+\half} \right|\right|_{L^2(\Omega)}
\left(\left|\left|\rho \pc \re^{i+\half} \right|\right|_{L^2(\Omega)}+
\left|\left|\rR^i\right|\right|_{L^2(\Omega)}\right)\\
&\qquad+2\dd \sum_{i=0}^n\left|\left|\lambda^{-\half} \zeta^{i+\half} 
\right|\right|_{L^2(\Omega)}
\left|\left|\lambda^{-\half} \phi^{i+\half}\right|\right|_{L^2(\Omega)}.
\end{split}
\end{equation}
Since $\left|\left|\la^{-\half}\xi^{i} \right|\right|_{L^2(\Omega)}
\leq \left|\left|\la^{-\half}\rx\right|\right|_{l^\infty(L^2)}$
and  $\left|\left|\rho^\half\rc^{i+\half} \right|\right|_{L^2(\Omega)}
\leq \left|\left|\rho^\half\rc\right|\right|_{l^\infty(L^2)}$, then
%
\begin{equation}
\label{eq:b11}
\begin{split}
&\left|\left|\rho^\half\rc^{n+1}\right|\right|^2_{L^2(\Omega)}+
\left|\left|\la^{-\half}\phi^{n+1}\right|\right|^2_{L^2(\Omega)}
+\dd^2\left(\theta-\frac{1}{4}\right)
\left|\left|\la^{-\half}\rx^{n+1}\right|\right|^2_{L^2(\Omega)}\\
&\leq4\dd^2\left(\theta-\frac{1}{4}\right)
\left|\left|\la^{-\half}\rx\right|\right|_{l^\infty(L^2)}
\left(\sum_{i=0}^n\left|\left|\la^{-\half}\zeta^{i+\half} 
\right|\right|_{L^2(\Omega)}\right)
\\
&\qquad+\frac{2\dd}{\rho_0^\half}\left|\left|\rho^\half \rc\right|\right|_{l^\infty(L^2)}
\left( \sum_{i=0}^n\left|\left|\rho \pc \re^{i+\half} \right|\right|_{L^2(\Omega)}+
\sum_{i=0}^n\left|\left|\rR^i\right|\right|_{L^2(\Omega)}\right)\\
&\qquad+2\dd\left|\left|\lambda^{-\half}\phi\right|\right|_{l^\infty(L^2)}
\left(\sum_{i=0}^n\left|\left|\lambda^{-\half} \zeta^{i+\half} \right|
\right|_{L^2(\Omega)}\right).
\end{split}
\end{equation}
%
Applying the algebraic inequality: 
$ab\leq \frac{\epsilon}{2} a^2+\frac{1}{2\epsilon} b^2$ to the right-hand side
of (\ref{eq:b11}) shows that
\begin{equation}
\label{eq:b12}
\begin{split}
&\left|\left|\rho^\half\rc^{n+1}\right|\right|^2_{L^2(\Omega)}+
\left|\left|\la^{-\half}\phi^{n+1}\right|\right|^2_{L^2(\Omega)}
+\dd^2\left(\theta-\frac{1}{4}\right)
\left|\left|\la^{-\half}\rx^{n+1}\right|\right|^2_{L^2(\Omega)}\\
&\leq\frac{1}{2}\dd^2\left(\theta-\frac{1}{4}\right)
\left|\left|\la^{-\half}\rx\right|\right|_{l^\infty(L^2)}^2
+8\left(\theta-\frac{1}{4}\right)
\left(\dd\sum_{i=0}^{N-1}\left|\left|\la^{-\half}\zeta^{i+\half}
\right|\right|_{L^2(\Omega)}\right)^2
\\
&\qquad+\frac{1}{2}\left|\left|\rho^\half \rc\right|\right|^2_{l^\infty(L^2)}+
C\dd^2 \left(\sum_{i=0}^{N-1}\left|\left|\rho \pc \re^{i+\half} \right|\right|_{L^2(\Omega)}+
\sum_{i=0}^{N-1}\left|\left|\rR^i\right|\right|_{L^2(\Omega)}\right)^2\\
&\qquad+\frac{1}{2}\left|\left|\lambda^{-\half}\phi\right|\right|^2_{l^\infty(L^2)}
+4\left(\dd \sum_{i=0}^{N-1}\left|\left|\lambda^{-\half} \zeta^{i+\half} \right|
\right|_{L^2(\Omega)}\right)^2.
\end{split}
\end{equation}
If we take the supremum over $n$ on the left-hand side and use the fact that 
$N\dd=T$, we conclude that
\begin{equation}
\label{eq:b13}
\begin{split}
&\left|\left|\rho^\half\rc\right|\right|^2_{l^\infty(L^2)}+
\left|\left|\la^{-\half}\phi\right|\right|^2_{l^\infty(L^2)}
\leq C\left|\left|\lambda^{-\half}\zeta\right|\right|^2_{l^\infty(L^2)}\\
&\qquad+
C\dd^2\left( \sum_{i=0}^{N-1}\left|\left|\rho \pc \re^{i+\half}
\right|\right|_{L^2(\Omega)}\right)^2+C\dd^2\left(\sum_{i=0}^{N-1}
\left|\left|\rR^i\right|\right|_{L^2(\Omega)}\right)^2.
\end{split}
\end{equation}
For the case  $\theta<\frac{1}{4}$, we can follow the analysis presented in
\cite{Kar-FE-Theta, Karaa-2012} to derive  error estimates similar to (\ref{eq:b13})
under  condition (\ref{eq:cfl2}).

To complete the proof, we need to bound each  term on the right-hand side 
of (\ref{eq:b13}).
The first term  can be bounded using
the approximation properties. Similarly, we have
$$
\dd\sum_{i=0}^{N-1}\left|\left|\rho\re^{i+\half}\right|\right|_{L^2(\Omega)}
\leq C\left(h^k||\ru||_{L^\infty(H^k(\Omega))}+\dd^2
\left|\left|\frac{\partial^3\ru}{\partial
t^3}\right|\right|_{L^1(0,T;L^2(\Omega))}\right).
$$
For the last term on  the right-hand side of (\ref{eq:b13}), we have
\begin{eqnarray*}
\dd\sum_{i=0}^{N-1}||\rR^i||_{L^2(\Omega)}
&\leq& C||\rR||_{l^\infty(L^2)}\\
&\leq& C\dd\sum_{i=1}^{N-1}||{\pmb r}^i||_{L^2(\Omega)}+C
||\rho(\Pi_h\rv^0-\rv^0)||_{L^2(\Omega)}\\
&&
+C\theta\dd^2\left|\left|\rho\pc\ru_{tt}^\half \right|\right|_{L^2(\Omega)}
+C\left|\left|\frac{1}{2\dd}\int_0^\dd \rho(\dd-t)^2\frac{\partial^3\ru}{\partial
t^3}(t)\,dt\right|\right|_{L^2(\Omega)}.
\end{eqnarray*}
To estimate $||{\pmb r}^i||_{L^2(\Omega)}$,
we make use of the identity
\begin{equation}\label{eq:t11}
\ps \ru^n=u_{tt}^n+\frac{1}{6\dd^2}\int_{-\dd}^{\dd}(\dd-|s|)^3
\frac{\partial^4\ru}{\partial t^4}(t^n+s)ds.
\end{equation}
From the Taylor's expansions of $\ru_{tt}^{n+1}$ and $\ru_{tt}^{n-1}$ about
$\ru_{tt}^{n}$;
$$
\ru^{n+1}_{tt}=\ru^n_{tt}+\dd
\ru_{ttt}^n+\int_{0}^{\dd}(\dd-|s|)\frac{\partial^4\ru}{\partial t^4}(t^n+s)\,ds,
$$
and
$$
\ru^{n-1}_{tt}=\ru^n_{tt}-\dd
\ru_{ttt}^n+\int_{-\dd}^{0}(\dd-|s|)\frac{\partial^4\ru}{\partial t^4}(t^n+s)\,ds,
$$
we obtain
\begin{equation}\label{eq:t12}
\ru^{n;\theta}_{tt}=\ru_{tt}^n+\theta\int_{-\dd}^{\dd}(\dd-|s|)
\frac{\partial^4\ru}{\partial t^4}(t^n+s)\,ds.
\end{equation}
Subtracting  (\ref{eq:t11}) from (\ref{eq:t12}) yields
$$
\ru^{n;\theta}_{tt}-\ps \ru^n=
\frac{1}{6\dd^2}\int_{-\dd}^{\dd}(|s|-\dd)^3\frac{\partial^4\ru}{\partial t^4}(t^n+s)\,ds
-\theta\int_{-\dd}^{\dd}(|s|-\dd)\frac{\partial^4\ru}{\partial t^4}(t^n+s)\,ds.$$
Hence, 
$$
||{\pmb r}^i||_{L^2(\Omega)}^2=
||\rho\left(\ru^{n;\theta}_{tt}-\ps \ru^n\right)||_{L^2}
\leq C\dd^3
\int_{-\dd}^{\dd}
\left|\left|\rho^\half\frac{\partial^4\ru}{\partial t^4}
(t^n+s)\right|\right|_{L^2(\Omega)}^2\,ds\leq C\dd^4\left|\left|\rho^\half
\frac{\partial^4\ru}{\partial t^4}
\right|\right|_{L^\infty(L^2)}^2,$$
and therefore
$$
\dd\sum_{i=1}^{n}||{\pmb r}^i||_{L^2(\Omega)}\leq
C\dd^2\left|\left|\rho^\half\frac{\partial^4\ru}{\partial
t^4}\right|\right|_{L^\infty(L^2)}\sum_{i=1}^{n}\dd
\leq CT\dd^2\left|\left|^2\rho^\half\frac{\partial^4\ru}{\partial t^4}
\right|\right|_{L^\infty(L^2)}.
$$
Similarly, we have
$$
\left|\left|\rho\pc\ru_{tt}^\half \right|\right|_{L^2(\Omega)}^2=
\left|\left|\dd\int_0^\dd \rho\frac{\partial^3\ru}{\partial
t^3}(t)\,dt\right|\right|_{L^2(\Omega)}^2
\leq C\dd^3\int_0^\dd
\left|\left|\rho^\half\frac{\partial^3\ru}{\partial
t^3}\right|\right|_{L^2(\Omega)}^2dt
\leq C\dd^4
\left|\left|\rho^\half\frac{\partial^3\ru}{\partial
t^3}\right|\right|_{L^\infty(L^2)}^2,
$$
and 
$$
\left|\left|\frac{1}{2\dd}\int_0^\dd \rho(\dd-t)^2\frac{\partial^3\ru}{\partial
t^3}(t)\,dt\right|\right|_{L^2(\Omega)}^2\leq C\dd^3\int_0^\dd
\left|\left|\rho^\half\frac{\partial^3\ru}{\partial
t^3}\right|\right|_{L^2(\Omega)}^2dt
\leq C\dd^4
\left|\left|\rho^\half\frac{\partial^3\ru}{\partial
t^3}\right|\right|_{L^\infty(L^2)}^2.
$$
Finally, using the approximation property (\ref{eq:w9}) and combining all the
bounds, we arrive at
$$
\dd\sum_{i=0}^{N-1}||\rR^i||_{L^2(\Omega)}\leq C(h^k+\dd^2),
$$
which completes the proof of the desired estimate.
\end{proof}
%

\noindent{\bf Remarks.}  It is worthwhile to mention that the time discretization method is fourth-order accurate
when $\theta=1/12$. To preserve the temporal accuracy of the finite element scheme one 
has to modify (\ref{eq:www3}) carefully to obtain an appropriate initial value 
$\rU^1$. The analysis presented in \cite{Kar-FE-Theta} can be used to
derive  optimal a priori error estimates in this case.


\section{Conclusions}


We proposed and analyzed a family of fully discrete mixed finite element 
schemes for solving the acoustic wave equation. 
We derived stability conditions for conditionally implicit
stable schemes covering the explicit
case treated by Jenkins,  Rivi\`ere and  Wheeler \cite{JRW}.  
The error estimates established provided optimal convergence rates for 
the use of mixed finite elements methods in solving the acoustic wave equation.


\begin{thebibliography}{10}



\bibitem{Baker-3}  {\sc  G. A. Baker},
{\em Error estimates for finite element methods for second order
hyperbolic equations},
SIAM J. Numer. Anal., 13 (1976), pp. 564--576.



\bibitem{Bao} {\sc  H. Bao, J. Bielak, O. Ghattas, L.~F. Kallivokas, D.~R.
O'Hallaron, J.~R. Shewchuk, and J. Xu},
{\em Large-scale simulation of elastic wave propagation in heterogeneous media
on parallel computers},
Comput. Methods Appl. Mech. Engrg., 152 (1998), pp. 85--102.


\bibitem{R4} {\sc F. Brezzi, J. Douglas Jr., M. Fortin and L.~D. Marini}, 
{\em Efficient rectangular mixed finite elements in two and three space 
variables}, 
RAIRO Mod\`el. Math. Anal. Num\'er., 21 (1987), pp. 581--–604.


\bibitem{R5} {\sc F. Brezzi, J. Douglas Jr. and L.~D. Marini}, 
{\em Two families of mixed elements for second order elliptic problems}, 
Numer. Math., 88 (1985), pp. 217–--235. 


\bibitem{CJRT}{\sc G. Cohen, P. Joly, J.~E. Roberts, and N. Tordjman},
{\em Higher order triangular finite elements with mass lumping for
the wave equation},
SIAM J. Numer. Anal., 38 (2001), pp. 2047--2078.






\bibitem{CDW-12} {\sc  L.~C. Cowsar, T.~F. Dupont, and M.~F. Wheeler},
{\em A priori estimates for mixed finite element methods for the wave equation},
Comput. Methods App. Mech. Engrg., 82 (1990), pp. 205--222.






\bibitem{CDW-13}
{\sc L.~C. Cowsar, T.~F. Dupont, and M.~F. Wheeler},
{\em A priori estimates for mixed finite element approximations of second-order
hyperbolic equations with absorbing boundary conditions},
SIAM J. Numer. Anal., 33 (1996), pp. 492--504.


\bibitem{Dupont-15} {\sc  T. Dupont},
{\em $L^2$-estimates for Galerkin methods for second order hyperbolic equations},
SIAM J. Numer. Anal., 10 (1973), pp. 880--889.


\bibitem{Geveci-19} {\sc  T. Geveci},
{\em On the application of mixed finite methods to the wave equation},
RAIRO. Mod\'el. Math. Anal. Num\'er., 22 (1988), pp. 243–--250.

\bibitem{GS-2009} {\sc  M.~J. Grote and D. Sch\"otzau},
{\em Optimal error estimates for the fully discrete
interior penalty DG  method for the wave equation},
J. Sci. Comput., 40 (2009), pp. 257–--272.


\bibitem{GKW-89} {\sc  R. Glowinski, W. Kinton, and M.~F. Wheeler},
{\em A mixed finite element formulation for boundary controllability 
of the wave equation},
Int. J. Num. Meth. Engng., 27 (1989), PP. 623-635.



\bibitem{Jenkins}  {\sc E.~W. Jenkins},
{\em  Numerical solution of the acoustic wave equation using 
Raviart-–Thomas elements},
J. Comput. Appl. Math., 206 (2007), pp. 420--431.
 

\bibitem{JRW} {\sc E.~W. Jenkins,  B. Rivi\`ere and M.~F.  Wheeler},
{\em A priori error estimates for mixed
finite element approximations of the acoustic wave equation},
SIAM J. Numer. Anal., 40 (2002), pp. 1698--1715.



\bibitem{Johnson} {\sc  C. Johnson},
{\em Discontinuous Galerkin finite element methods for second order
hyperbolic problems},
Comput. Methods Appl. Mech. Engrg., 107 (1993), pp. 117--129.










\bibitem{Kar-FE-Theta} {\sc S. Karaa},
{\em Finite element $\theta$-schemes for the acoustic wave equation},
Adv. Appl. Math. Mech., 3 (2011), pp. 181-203.


\bibitem{Karaa-2012} {\sc S. Karaa},
{\em Stability and convergence of fully discrete finite element
methods for the acoustic wave equation},
J.  Appl. Math. Comput., 40 (2012), pp. 659-682.


\bibitem{Knops-Payne-1971} {\sc R.~J. Knops and L.~E. Payne},
Uniqueness Theorems in Linear Elasticity, Springer Tracts Nat. Philos. 19,
Springer-Verlag, Berlin, 1971.


\bibitem{Marfurt} {\sc  K.~J. Marfurt},
{\em Accuracy of finite-difference and finite-element modeling of the scalar
and elastic wave equations},
Geophysics, 49 (1984), pp. 533--549.




\bibitem{R15}  {\sc  J.~C. Nedelec},
{\em Mixed finite elements in $\mathbb{R}^n$}, 
Numer. Math., 35 (1980), pp. 315--341.


\bibitem{Pani-2001}  {\sc  A.~K. Pani and J.~Y. Yuan},
{\em Mixed finite element methods for a strongly damped wave equation}, 
Numer. Methods Partial Differential Equations, 17  (2001),  no. 2, pp. 105-?119.


\bibitem{Pani-2004}{\sc A.~K. Pani, R.~K. Sinha and A.~K. Otta}, 
{\em An $H^1$-Galerkin mixed method for 
second order hyperbolic equations}, 
Int. J. Numer. Anal. Model., 1 (2004), no. 2, pp. 111--–130. 


\bibitem{Rauch} {\sc  J. Rauch},
{\em On convergence of the finite element method for the wave
equation},
SIAM J. Numer. Anal., 22 (1985), pp. 245--249.


\bibitem{R16} {\sc R.~A. Raviart and J.~M. Thomas}, 
Mathematical Aspects of the Finite Element Method, 
Lecture Notes in Mathematics, vol. 106, Springer, Berlin, 1997, pp. 292--–315.


\bibitem{RW-27} {\sc  B. Rivi\`ere and M.~F.  Wheeler},
{\em Discontinuous finite element methods for acoustic
and elastic wave problems. Part I: Semidiscrete error estimates},
TICAM report 01--02, University of Texas, Austin, TX, 2001.


\bibitem{RW-28}  {\sc  B. Rivi\`ere and M.~F.  Wheeler},
{\em Discontinuous finite element methods for
acoustic and elastic wave problems}, in ICM2002-Beijing Satellite
Conference on Scientific Computing, Contemporary Mathematics, Vol. 329, 
AMS, Providence (2003), pp. 271--282.


\bibitem{VT-2008} {\sc T. Vdovina and S.~E. Minkoff},
{\em An a priori error analysis of operator upscaling for the 
acoustic wave equation},
Inter. J. Numer. Anal. model., 5 (2008), pp. 543--569.




\end{thebibliography}
\end{document}